\newcommand\marktopleft[1]{%
    \tikz[overlay,remember picture] 
        \node (marker-#1-a) at (0,1.5ex) {};%
}
\newcommand\markbottomright[1]{%
    \tikz[overlay,remember picture] 
        \node (marker-#1-b) at (0,0) {};%
    \tikz[overlay,remember picture,thick,dashed,inner sep=3pt]
        \node[draw,rectangle,fit=(marker-#1-a.center) (marker-#1-b.center)] {};%
}
\newcommand{\Gp}{G^+}
\newcommand{\Gm}{G^-}
\newcommand{\expit}{{\text{expit}}}
 \newcommand{\drawunit}[4]{
\draw (#1, #2)  
node[draw,shape=circle, color=gray, text=black, fill=#3, inner sep=0.7mm] 
		(#4) {\footnotesize #4};
}
\newcommand{\Ex}[1]{E(#1)}
\newcommand{\X}{\boldsymbol{X}}
\newtheorem{theorem}{Theorem}[section]
\newtheorem{definition}[theorem]{Definition}
\newcommand{\Ave}{{\rm Ave}}
\newcommand{\acc}[2]{\mathrm{\zeta}[#1, #2]}
\newcommand{\accX}[2]{\mathrm{\zeta}_X[#1, #2]}
\newcommand{\J}[2]{\mathrm{J}[#1, #2]}
\newcommand{\h}{\mathsf{L}}
\newcommand{\hi}[1]{\h_{#1}}
\begin{document}
\pagestyle{empty}

\title{
Propensity score methodology in the presence of network entanglement between treatments
\protect\thanks{Panos Toulis is an Assistant Professor of Econometrics and Statistics, and a John E. Jeuck Faculty Fellow, at the University of Chicago Booth School of Business (\href{mailto:panos.toulis@chicagobooth.edu}{panos.toulis@chicagobooth.edu}).  Alexander Volfovsky is an Assistant Professor of Statistical Science at Duke University (\href{mailto:alexander.volfovsky@duke.edu}{alexander.volfovsky@duke.edu}).
Edoardo M.~Airoldi is an Associate Professor of Statistics at Harvard University (\href{mailto:airoldi@fas.harvard.edu}{airoldi@fas.harvard.edu}). 
This work was partially supported 
 by the National Science Foundation under grants 
  CAREER IIS-1149662 and IIS-1409177,
 by the Office of Naval Research under grants 
  YIP N00014-14-1-0485 and N00014-17-1-2131, 
 by a Shutzer Fellowship and an Alfred Sloan Research Fellowship to Edoardo M.~Airoldi and by a NSF MSPRF for Alexander Volfovsky (DMS-1402235).
The authors are grateful to Daniel Sussman, Dean Eckles, Elizabeth Ogburn, David Choi and Hyunseung Kang for helpful discussions.}}
\author{Panos Toulis}
\affil{Booth School of Business, University of Chicago}
\author{Alexander Volfovsky} 
\affil{Department of Statistical Science, Duke University}
\author{Edoardo M. Airoldi}
\affil{Department of Statistics, Harvard University}
\maketitle

\newpage
\begin{abstract}
In experimental design and causal inference, it may happen that the treatment is not defined on individual experimental units, but rather on pairs or, more generally, on groups of units. For example, teachers may choose pairs of students who do not know each other to teach a new curriculum; regulators might allow or disallow merging of firms, and biologists may introduce or inhibit interactions between genes or proteins. In this paper, we formalize this experimental setting, and we refer to the individual treatments in such setting as {\em entangled treatments}. We then consider the special case where individual treatments depend on a common population quantity, and develop theory and methodology to deal with this case. In our target applications, the common population quantity is a network, and the individual treatments are defined as functions of the change in the network between two specific time points. Our focus here is on estimating the causal effect of entangled treatments in observational studies where entangled treatments are endogenous and cannot be directly manipulated. When treatment cannot be manipulated, be it entangled or not, it is necessary to account for the treatment assignment mechanism to avoid selection bias, commonly through a propensity score methodology. In this paper, we quantify the extent to which classical propensity score methodology ignores treatment entanglement, and characterize the bias in the estimated causal effects. To characterize such bias we introduce a novel similarity function between propensity score models, and a practical approximation of it, which we use to quantify model misspecification of propensity scores due to entanglement. One solution to avoid the bias in the presence of entangled treatments is to model the change in the network, directly, and calculate an individual unit's propensity score by averaging treatment assignments over this change.

\vfill

\textbf{Keywords:} Causal inference, observational studies, propensity scores, treatment entanglement, misspecification, bias, network data.

\end{abstract}

\newpage
\singlespacing
\tableofcontents
\normalsize
\doublespacing

\newpage
\pagestyle{plain}
\setcounter{page}{1}

\section{Introduction}
\label{sec:intro}
In causal inference we are typically interested in evaluating the effects of a treatment 
that can be applied {\em individually} on the experimental units. However, when the units form networks (e.g., social or professional networks) the treatment is frequently applied to pairs, or more generally, on groups of connected units. Thus, the treatments of individual units are {\em entangled}. 
Notions of entanglement pervade many fields: game theory has long emphasized the central role that social and professional networks have in labor market outcomes and mobility \citep{montgomery1991social,montgomery1992job,podolny1997resources,calvo2004effects}; labor economics studies the effect of the density of a professional network on knowledge diffusion and innovation \citep{dahl2002embedded,kim2005labor,agrawal2006gone,oettl2008international,granovetter2005impact,topa2001social,kaiser2011labor}; marketing has recently been concerned with understanding the financial and social value of online friendships \citep{ellison2007benefits,hanna2017positive,hobbs2016online,manchanda2015social,zhang2014working}.
Such {\em treatment entanglement} presents new methodological challenges that have not been addressed in the literature despite the growing interest in evaluating treatment effects on networks of connected units. 

The typical concern when there is a network connecting the experimental units is that 
 of interference~\citep{cox1958planning, rubin1974estimating}, which means that
the individual outcome  of a unit may depend on the treatment assignment of other units. 
When interference is present it is no longer tenable that units have only two potential outcomes (as is commonly assumed in classical analysis) depending on whether they are treated or not, but may have more depending on who else is treated.
A frequent assumption to simplify analysis in the network setting  is that of 
neighborhood interference, i.e., that a unit $i$'s outcome may be affected only by the treatment of units that are connected to $i$ in the network~\citep{sussman2017elements}. Under such assumptions on the topology of interference, a line of literature has emerged on developing mainly randomization-based methodologies to test hypotheses on causal effects, including typical primary effects and spillover effects from interference~\citep{rosenbaum2007interference, hudgens2008toward,toulis2012estimation,toulis2013estimation,aronow2013estimating, bowers2013reasoning, basse2015optimal, karwa2016bias, choi2016estimation, sussman2017elements, eckles2017design,jagadeesan2017designs, basse2017exact}.

%

%
In the context of observational studies, where treatment assignment cannot be manipulated, there has been relatively less progress, which has mostly focused on model-based identification of causal effects~\citep{manski1993identification, bramoulle2009identification, manski2013identification, Forastiere:2016aa}. Results in this line of work typically characterize conditions in the underlying network that ensure identification of the model parameter of causal interest.
A key complication in observational studies in contrast to randomized experiments is reasoning about the exact mechanism that produces interference. 
For example, one mechanism could be that units that are connected in the network might have similar outcomes to treatment because they are simply similar units to begin with (homophily); 
another could be that a unit's outcomes may be affected by the outcomes of their neighbors in previous time points (contagion). The combined causal effects from homophily and contagion are empirically indistinguishable without additional assumptions~\citep{shalizi2011homophily}. 

The problem of entanglement, on the other hand, has received no attention in the literature.
To illustrate, Figure~\ref{figure:intro} depicts the type of datasets this paper is concerned with. The experimental units, indexed by $i$, form a network $G^-$ at time $t^-$, and the network evolves to $G^+$ at time $t^+$. The individual treatment $Z_i$ for each unit $i$ is defined as a function of the change from $G^-$ 
to $G^+$, which we denote as $Z_i = f_i(G^-, G^+)\in\mathbb{R}$. For instance, 
$Z_i$ could be the number of new connections $i$ makes from $\Gm$ to $\Gp$. Outcomes $Y_i\in\mathbb{R}$ are measured for each unit $i$ at $t^+$.
Our goal is to measure the causal effect of treatment $Z$ on outcome $Y$ through the 
framework of potential outcomes, which we detail in the following section.

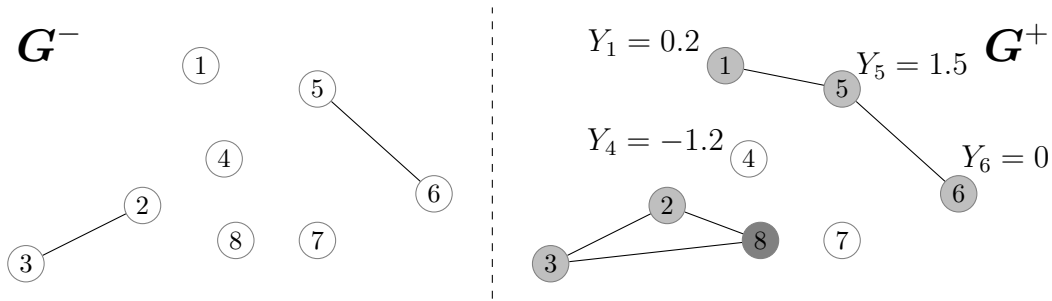
\begin{figure}[t!]
\centering
\begin{tikzpicture}[scale=1.55]
\drawunit{3.5}{2}{white}{1}
\drawunit{3}{0.8}{white}{2}
\drawunit{2}{0.3}{white}{3}
\drawunit{3.5+0.2}{1.2}{white}{4}
\drawunit{3.5 + 1.0}{1.8}{white}{5}
\drawunit{3.5 + 2}{0.9}{white}{6}
\drawunit{2.5 + 2}{0.5}{white}{7}
\drawunit{2.5 + 1.3}{0.5}{white}{8}
\draw  (2) -- (3);
\draw  (5) -- (6);

\draw [dashed] (6, 0) -- (6, 2.5);

\drawunit{4.5 + 3.5}{2}{lightgray}{1}
\drawunit{4.5 +  3}{0.8}{lightgray}{2}
\drawunit{4.5 + 2}{0.3}{lightgray}{3}
\drawunit{4.5 + 3.5+0.2}{1.2}{white}{4}
\drawunit{4.5 +  3.5 + 1.0}{1.8}{lightgray}{5}
\drawunit{4.5 +  3.5 + 2}{0.9}{lightgray}{6}
\drawunit{4.5 +  2.5 + 2}{0.5}{white}{7}
\drawunit{4.5 + 2.5 + 1.3}{0.5}{gray}{8}
\draw  (2) -- (3);
\draw  (5) -- (6);
\draw (5) -- (1);
\draw (3) -- (8);
\draw  (2) -- (8);
\node at (2.2, 2.2) {\Large $\boldsymbol{G}^{-}$};
\node at (10.5, 2.2) {\Large $\boldsymbol{G}^{+}$};
\node at (9.6, 2) {$Y_5=1.5$};
\node at (7.3, 2.2) {$Y_1=0.2$};
\node at (7.4, 1.35) {$Y_4=-1.2$};

\node at (10.4, 1.2) {$Y_6=0$};
\end{tikzpicture}
\caption{A network $\Gm$ among 8 units evolves 
during the treatment period to a new network $\Gp$. 
Treatment of a unit is the number of new connections the unit makes 
from $\Gm$ to $\Gp$.
Shades indicate number of new connections: white = none
(units 4 and 7), lightgray = one 
(units 1, 2, 3, 5, 6), and gray = two (unit 8).
Outcomes $Y_i$ are measured on each unit $i$, potentially 
with covariate information $X_i$. 
%
}
\label{figure:intro}
\end{figure}

%
%

In this paper, we assume that the evolution from $G^-$ to $G^+$ is endogeneous, and thus the evaluation of causal effects may be confounded with the units' covariates. Information such as age, gender, occupation, or location may affect how the network evolves.
For example, in Figure~\ref{figure:intro} the situation could be such that 
units that form  more professional connections from $t^-$ to $t^+$ improves their monthly wages. It would be tempting to associate making new connections to improved wages, but in doing so
we ignore that being more sociable confounds both making more new professional connections and having better professional outcomes.

One general method to avoid such biases from units self-selecting into 
treatment is to use propensity score methods~\citep{rosenbaum1983central, rosenbaum1984reducing, heckman1990varieties, rubin2006estimating}. 
The idea is to model, for every unit $i$, the regression functions $E(Z_i | X_i)$ conditional on $i$'s covariates $X_i$, and then compare outcomes of units with 
similar regression functions.~When $Z_i$ is binary (with $Z_i=1$ denoting the 
treatment, by convention) the regression of $Z_i$ on $X_i$ is known as the propensity score of unit $i$ because it 
is equal to the propensity, $p(Z_i=1|X_i)$, of a unit with 
covariates $X_i$ to receive the treatment.
\citet{rosenbaum1983central} showed that conditional on 
the propensity scores the bias due to the aforementioned confounding is mitigated, and thus we can compare within-group outcomes between treated and control units to obtain estimates of the causal effect of the treatment.%

However, the classical propensity score methodology
cannot be directly applied in our setting. One complication can be that the treatment is multivalued and not binary,
and there is limited work so far in generalizing propensity scores to non-binary treatments \citep{imbens2000role, hirano2004propensity}.
More importantly, in modeling $E(Z_i |X_i)$ the classical methodology tacitly assumes that the treatment is applied individually on each unit $i$. However, when treatment is entangled, estimation
of $E(Z_i|X_i)$ individually on each unit $i$ may be biased because the individual treatments  depend on the network evolution from 
$G^-$ to $G^+$, which is a population quantity.
%
In the example of Figure~\ref{figure:intro}, when a unit makes one connection this immediately implies that another unit makes a connection.
The correct approach would therefore be to model the propensity score of unit $i$ taking into account information from every other unit $j$  that could connect to $i$ during the evolution of $\Gm$. By ignoring treatment entanglement the classical methodology may provide causal estimates that are severely biased depending on how different these two models are.

In this context, we extend the 
classical propensity score methodology to settings with treatment entanglement. 
Our theoretical contribution relies on a novel similarity function that 
measures how different  two propensity score models are in terms 
of the causal estimates they produce.  
Under specific  assumptions, we show that typical linear propensity score models 
may be severely misspecified when there is treatment entanglement, even in simple settings where the individual treatment 
$Z_i$ depends on covariates of a single dyad of units $(i, j)$.
Our methodological contribution includes an algorithm that takes into account the uncertainty about the observed network through an appropriate network evolution model, and produces improved estimates of the individual propensity scores 
by averaging over probable network evolution trajectories.
%

%

%

\subsection{Paper organization} 
The rest of this paper is organized as follows. 
In Section \ref{sec:prelims} we introduce the necessary notation for causal inference using the potential outcomes framework. 
We focus on the setting of observational studies and provide a brief overview of classical propensity score methodology in Section~\ref{sec:subclassification} and the subclassification method for 
estimating causal effects.
In Section~\ref{sec:entanglement_def} we provide the formal definition of treatment entanglement and in Section~\ref{sec:challenges} we describe the statistical challenges in settings with entanglement, and show that classical propensity score methods are generally unjustified in such settings as they may lead to significant biases of estimated treatment effects.
%
In Section~\ref{sec:methodology} we argue that the subclassification similarity between propensity score models provides a reliable measure  
of causal estimation bias of one model 
assuming the other model is the true one.
To avoid such bias we propose a methodology 
that averages over the uncertainty in the network evolution that determines entanglement, and thus produces a model that has high
subclassification similarity to the true one.
Our theoretical analysis begins in Section~\ref{sec:theory} where we present results on a novel subclassification similarity function 
between two propensity score models. In Section~\ref{sec:inner} and Section~\ref{sec:dyadic} 
we focus on well-known network models. Our analysis shows that classical propensity score methodology may be severely biased as it has minimal subclassification similarity with the aforementioned network models. %
In Section~\ref{sec:discussion} we make some concluding remarks, 
including connections of entanglement to interference and selection on observables.
\section{Causal inference with entangled treatments}
\label{sec:entanglement}
In this section, we develop the concept of treatment entanglement, where 
treatment is conceptualized as the evolution of a network between units. Even though the concept of entanglement is more general we 
focus on the network aspect because it allows to develop all necessary concepts and theory using existing language of graph theory and network analysis.

\subsection{Preliminaries on propensity score methodology} 
\label{sec:prelims}
%
Consider $N$ experimental units, indexed by $i$, such that  
every unit $i$ is either treated, denoted by $Z_i=1$, or is
in control ($Z_i=0$). The binary vector $Z = (Z_i)$ is the population treatment assignment.
The potential outcome for unit $i$ under population treatment $Z$ is denoted by $Y_i(Z)\in\mathbb{R}$.
Assuming that the treatment of individual
$i$ only affects the outcome of $i$ and that there are 
no hidden versions of the treatment---collectively known as stable unit-treatment value assumption (SUTVA) in statistics \citep{rubin1980comment}, or individualistic treatment response (ITR) in the economics literature \citep{manski2013identification}---the potential outcome of every unit $i$ can take only two 
possible values~\citep{neyman1923}, namely, $Y_i(1)$ when $i$ is treated, and $Y_i(0)$ when $i$ is in control.
For each unit $i$ only one potential outcome can be observed in the 
experiment, and is denoted by $Y_i\in \{Y_i(0), Y_i(1)\}$. 

The causal effect of the treatment relative to control is usually captured by the average treatment effect (ATE, \cite{imbens2015causal}), 
$\tau = \Ex{Y_i(1) -Y_i(0)},$
where the expectation is over an assumed superpopulation of units that are randomized to treatment or control.
Under SUTVA, it is easy to estimate the ATE in a randomized experiment from the difference in sample means between 
treated and control units:
\begin{align}
\label{eq:est}
\hat\tau=\frac{\sum_i Z_i Y_i}{\sum_i Z_i}-\frac{\sum_i (1-Z_i)Y_i}{\sum_i (1-Z_i)}.
\end{align}
When treatment is randomly assigned $\hat\tau$ is an unbiased estimator of $\tau$, and variance estimates for $\hat\tau$ are available~\citep{neyman1923}.
In this context, resampling-based procedures are possible to test
for hypotheses on treatment effects, including 
recent extensions to settings with interference
~\citep{rosenbaum2007interference, hudgens2008toward,toulis2013estimation, aronow2013estimating, eckles2017design,basse2017exact}.

However, when units cannot be randomized to treatment, as is frequent in practice, the treated and control 
units may not be directly comparable because the difference in their treatment status reflects differences in their characteristics. In such cases $Z_i$ typically 
depends on unit-level covariates, $X_i$, such as age, location, or socioeconomic status, and simple estimators, such as $\hat\tau$ in Equation~\ref{eq:est}, are biased because  the units that are treated 
may have different covariates $X_i$ than the units in control. 
%
To resolve such biases  one widely-applied methodology is to compare the outcomes of 
units that have similar propensity to select into 
treatment.
Classical propensity score methods first estimate the propensity score, which is the regression function 
\begin{align}
\label{eq:propensity}
e(X_i)= E(Z_i | X_i),
\end{align}
and then use the propensity scores to control for selection bias. 
The main idea relies on the seminal result by~\citet{rosenbaum1983central} who showed that conditional on $e(X_i)$ the treatment indicator $Z_i$ is independent of $X_i$. Therefore, comparing outcomes of treated and control units that have similar propensity scores can yield valid causal inference.

There are different ways in which we can leverage propensity scores for such 
comparisons. One approach uses matching units with similar propensity scores~\citep{cochran1973controlling, rubin1978using, raynor1983caliper, rosenbaum2002observational, dehejia2002propensity, stuart2010matching}, 
and reweighing unit outcomes based on the propensity score~\citep{hirano2001estimation, rubin2001using, lunceford2004stratification}, 
such as the inverse propensity weight estimator (IPW).
In this paper,  we focus on subclassification as it generalizes the other two  methods: matching is point subclassification~\citep{stuart2010matching}, and an IPW estimator is a limit of subclassification estimators \citep{rubin2001using}. 
Subclassification is also robust to scaling and, more generally, to monotone transformations of the propensity score, which provides with a robust measure of propensity score misspecification and bias~(see Section~\ref{sec:theory}).

\subsection{Subclassification on the propensity score}
\label{sec:subclassification}
%
%
%
Subclassification on the propensity score has several variations but 
typically proceeds as follows. Let $K$ be a fixed number of classes; for example, \citet{rosenbaum1983central} suggest $K=5$ or $10$ based 
on normality arguments.
Let $Q_1,\dots,Q_K$, denote $K$ successive quantiles of all
estimated propensity scores $\{\hat e_i\}$, and define the variable $R_{ik}^e = \mathbb{I}\{\hat e_i \in Q_k\}$ as the indicator of whether unit $i$ falls in class $k$ under propensity propensity score model $e$.
\citet{rosenbaum1983central} showed that the estimator 
\begin{align}
\label{eq:est_sub1}
\hat\tau_k =\frac{\sum_i R_{ik}^e Z_i Y_i}{\sum_i R_{ik}^e Z_i}-
\frac{\sum_i R_{ik}^e(1-Z_i)Y_i}{\sum_i R_{ik}^e(1-Z_i)}
\end{align}
is a valid estimate of the causal effect $\tau$, for every class $k$. 
Note that $\hat\tau_k$ is analogous to $\hat\tau$ in Equation~\eqref{eq:est} if we focus only on units in class $k$. Intuitively, within such a class the treatment is as if randomized and so the 
simple difference-in-means estimator is unbiased. 
The within-class estimates can also be combined, say, through the estimator
\begin{align}
\label{eq:est_sub2}
\hat\tau(Y; X, e) = \frac{1}{N} \sum_k N_k \hat\tau_k,
\end{align}
where $n_k = \sum_i R_{ik}^e$ is the size of class $k$, and $N = \sum_k N_k$ is the total number of units. 
In our application the treatment can be multivalued, and thus we need to consider propensity scores of the form $e(l,X_i)=p(Z_i=l|X_i)$, where $l\in\mathbb{N}$ is a natural number denoting, say, the number of new network connections $i$ made in the treatment period. Under the same assumptions as in the classical binary treatment setting we can still subclassify units based on the treatment levels we want to compare. For instance, if we wish to compare the outcomes of a unit making two connections against making only one connection, we could subclassify units based on pair values $\{(e(2, X_i), e(1, X_i)\}$ as in~\citep{imbens2000role,hirano2004propensity}, since causal estimation 
will be performed conditional on the units receiving either $Z_i=1$ or $Z_i=2$.

\subsection{Treatment entanglement} 
\label{sec:entanglement_def}
We assume that the treatment 
takes place from time $t^-$ to $t^+$. In particular, the units form 
connections at $t^-$ denoted by a $N\times N$ binary matrix $G^-$, such that $g_{ij}^-=1$ only if units $i$ and $j$ are connected at $t^-$.  Similarly, $G^+$ denotes connections between units at $t^+$.
For simplicity, and without loss of generality, we assume undirected networks, and thus symmetric matrices $G^-, G^+$.
%
%
The individual treatment $Z_i$ on unit $i$ is defined as the number of new connections in the treatment period:
\begin{align}
\label{eq:Zi}
Z_i = d_i(\Gp) - d_i(\Gm),
\end{align}
where $d_i(G) = \sum_j g_{ij}$ denotes the degree of unit $i$ in network $G$.
Such definition allows us to study a broad range of potential interesting policy questions, such as the effect of interactions with peers on individual outcomes~\citep{aral2009distinguishing}, 
or the effect of making new professional connections on labor outcomes~\citep{montgomery1991social}.
We note that the specification of the treatment does not require any assumptions on the presence or absence of interference. Throughout we will make a no interference assumption to focus more clearly on the problem of entanglement, and will discuss extensions to settings with interference in Section~\ref{sec:discussion}.

The important thing to notice about Equation~\eqref{eq:Zi}
is that it implies a constraint between the units' treatments, which may be expressed as:
\begin{equation}
\label{eq:entanglement}
\h(Z)= \h(Z_1, Z_2, \ldots, Z_n) = 0.
\end{equation}
For example, the constraint in Equation~\eqref{eq:Zi} can be written as $Z - (G^+ - G^-) \boldsymbol{1} = 0$, where $\mathbf{1}$ denotes the $N$-component vector of ones. Such constraints imply that there is {\em treatment entanglement}, that is, the individual treatments of units depend on each other.
Function $\h$ is the {\em entanglement function}, and is generally vector-valued.

Furthermore, the individual treatment $Z_i$ of unit $i$ may be expressed more generally as $Z_i=f_i(\Gm,\Gp)$, where function $f_i$ is abstractly defined as a function mapping from network evolution to treatment space. Potential meaningful specifications 
for this function include Equation~\eqref{eq:Zi}, which 
is the specification that we use in this paper.
Alternative definitions could be whether the neighborhood of unit $i$ grew, $f_i(\Gm,\Gp)=\mathbb{I}\{d_i(\Gp)>d_i(\Gm)\}$, where $\mathbb{I}$ is the indicator function, and so on.
Functions $f_i$ are application-specific, and imply the 
generic entanglement constraint $\h(Z) = Z-f(\Gm, \Gp) = 0$, where 
$f$ applies $f_i$ component-wise.
Even more generally, we can consider probabilistic entanglement where rather than requiring $\h(Z) = 0$ we require $P(\h(Z)=0) > 0$.
We explore this setting in Section~\ref{sec:prob_ent}.

%
For an illustration of simple entanglement, consider the population of two units in Figure~\ref{fig:toy_ent}(a).
If unit 1 forms a connection to unit 2, then unit 2 must also form a connection to unit 1, and vice versa. This implies the entanglement constraint 
$
\h(Z_1, Z_2) = Z_1-Z_2=0.
$
 In this setting we can therefore only observe two untreated or two treated units.
 We note that our definition of treatment entanglement is substantially more general than just the network case we consider in this paper. In fact, one of the most commonly studied experimental designs, the completely randomized design (CRD), is entangled. In a CRD where the fraction of treated units is $p$, the entanglement function is  $\hi{}(Z)=Z^\top \mathbf{1}-np = 0$. We note that this particular type of entanglement (while not identified by name) is frequently accounted for in observational studies. Recent work by \citet{branson2017randomization} addresses the differences in causal inference under assumptions of a CRD versus the unentangled Bernoulli trial.

%
%
\begin{figure}[t]
\centering
\begin{subfigure}{.33\textwidth}
\centering
            \begin{tikzpicture}[->,>=stealth',shorten >=1pt,auto,node distance=2cm, thick, column sep=1cm,main node/.style={circle, draw=black, fill=none}]
        \node[main node] (1) {1};
        \node[main node] (2) [right of=1] {2};
        \end{tikzpicture}
        \caption{Pre-treatment network $\Gm$ 
        between two units.}
\end{subfigure}\hfill
\begin{subfigure}{.32\textwidth}
\vspace{10pt}
\centering
        \begin{tikzpicture}[->,>=stealth',shorten >=1pt,auto,node distance=2cm, thick, column sep=1cm,main node/.style={circle, draw=black, fill=none}]
      
        \node[main node] (1) {1};
        \node[main node] (2) [right of=1] {2};
        \end{tikzpicture}
        \caption{Post-treatment network $\Gp$ where neither unit is treated ($Z_1=Z_2=0$).}
      
      
\end{subfigure}\hfill
\begin{subfigure}{.32\textwidth}
\vspace{10pt}
\centering
                  \begin{tikzpicture}[-,>=stealth',shorten >=1pt,auto,node distance=2cm, thick, column sep=1cm,main node/.style={circle, draw=black, fill=none}]
      
        \node[main node] (1) {1};
        \node[main node] (2) [right of=1] {2};
      
         \path[every node/.style={font=\sffamily\small}]
         (1) edge node []{} (2);
        \end{tikzpicture}
        \caption{Post-treatment network $\Gp$ where both units are treated ($Z_1=Z_2=1$).}
      \end{subfigure}

  \caption{Pre- and post-treatment networks with two units. This is the smallest possible example of entangled treatments when treatment is measured as the formation of new edges (e.g., making a friend) in the time interval between $\Gm$ and $\Gp$.}
    \label{fig:toy_ent}
\end{figure}
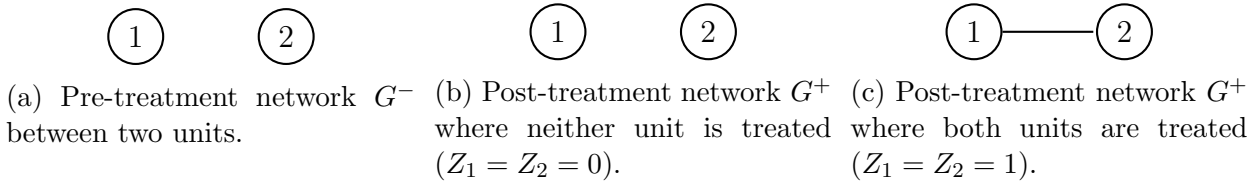

The simple example of Figure~\ref{fig:toy_ent} also allows us to illustrate the failure of classical propensity score methodology under treatment entanglement.
Consider the case in Figure~\ref{fig:toy_ent}(c) where both units are treated ($Z_1=Z_2=1$),
and suppose that unit 1 is a junior employee, unit 2 is its senior supervisor, and both units work in the same company.
A classical propensity score model will try to explain the observed treatment assignment based on covariates that are common between the units; here,
such covariate is the company where both units work. 
The classical methodology does not capture {\em dyadic covariates} that are important for the treatment assignment; here, for example, it could be that 
the dyad covariate of ``difference in seniority'', is the significant factor for edge creation. The covariates in the dyad (i.e., seniority level) may not be significant if taken individually. As dyad covariates are not included and the network structure is ignored, the classical methodology has a limited capacity to model treatments defined by networks of units, which leads to biased estimates of causal effects in observational studies on networks. 
In the following section we give more details on the challenges posed by treatment entanglement, and an approach to address them.
%

%


\subsection{Challenges under treatment entanglement}
\label{sec:challenges}
Consider the entangled treatment in Equation~\eqref{eq:Zi}, 
and suppose that edges cannot be deleted in the treatment period, so that $Z_i \ge 0$.
The classical propensity score methodology models the following regression function, which generalizes the definition in Equation~\eqref{eq:propensity}:
\begin{align}
\label{eq:propensity_multi}
e(l, X_i) = P(Z_i=l|X_i, \Gm),~l\in\mathbb{N}.
\end{align}
The values $e(l, X_i)$ are propensity scores, despite $Z_i$ being non-binary,
because they capture the likelihood that unit $i$ with covariates $X_i$ makes $l$ new connections.
%
As mentioned in Section~\ref{sec:subclassification}, to estimate causal effects using subclassification on the propensity scores of Equation~\eqref{eq:propensity_multi} we can group units with similar values of estimated  
propensity scores (possibly pair-values), compare outcomes within groups, and then aggregate comparisons across groups~\citep{hirano2004propensity, lee2015efficient}.

These procedures ignore dependence between individual treatments due to entanglement. 
For example, in Figure~\ref{fig:toy_ent} the entanglement between units 1 and 2, characterized by  the entanglement constraint $\h(Z_1, Z_2) = Z_1 - Z_2 = 0$, implies the constraint
$e(l, X_1) = e(l, X_2)$, which will generally not be satisfied if the propensity score model of Equation~\eqref{eq:propensity_multi} if fit individually per unit.
The subtle issue here is that when we model the probability that $Z_i=l$ we implicitly model the conditional of $\Gp$ given $\Gm$, since $Z_i$ is a function of both of them by Equation~\eqref{eq:Zi}. Classical methods ignore this implicit relationship between $Z_i$ and $\Gp$ and the constraints it entails.~The proper way to compute the propensity scores is therefore to marginalize over 
the post-treatment network, so that
\newcommand{\Gboth}{\Gm,\Gp}
\begin{align}
\label{eq:PS_marginal}
  p(Z_i=l|\boldsymbol{X}, \Gm) = \int\limits_{f_i(\Gm, \Gp)=l} p(\Gp|\Gm, \X)d\mu(\Gp),
\end{align}
where $\X$ denotes the covariates of all units, and $\mu$ is an 
appropriate probability measure on networks. 
Classical propensity score methods ignore the integration in Equation~\eqref{eq:PS_marginal}, and thus effectively remove any dependence between individual treatments from entanglement.
%
What is needed instead is to model the change
from $\Gm$ to $\Gp$, possibly conditional on covariates, and 
then obtain the true propensity scores of units 
by using the treatment definition $Z_i = f_i(\Gm, \Gp)$, which we can then use 
to estimate the causal effect, say, through subclassification.

The choice of model for the network is complicated and application-specific. Possible choices include simple rewiring models \citep{dietz1988epidemiological}, Temporal Exponential Random Graph Models (a generalization of the ERGM framework, \citet{hanneke2007discrete,hanneke2010discrete}) and dynamic latent space models \citep{sarkar2006dynamic,sewell2015latent,durante2014nonparametric}. When it is reasonable to assume that $\Gm$ and $\Gp\setminus\Gm$ are conditionally independent, one can appeal to the generalizability of latent space models \citep{hoffrafteryetal2002}: one can model $\Gm$ conditional on unit and dyadic covariates and use the fitted model to compute the probability of edges in $\Gp\setminus\Gm$. 


Building from these ideas, in the following section we give a detailed description of our full proposed methodology for causal inference under entanglement.

\section{Methodology} 
\label{sec:methodology}
Here, we present our methodology for adapting the propensity score methodology to cases where there is treatment entanglement. 
Our approach does not rely on a particular definition of treatment or causal estimands. 
We assume that for a treatment definition $Z_i = f_i(\Gm, \Gp)$ the 
potential outcome of every unit $i$ depends only on $Z_i$ and not on the actual value of the population treatment vector $Z$.
Formally, $Y_i(Z) = Y_i(Z')$ if $Z_i = Z'_i$, for every unit $i$ and population assignments $Z, Z'$. This is a form of SUTVA.
This allows us to focus on simple causal estimands of the form 
$\tau_m = E(Y_i(m)-Y_i(m-1))$, which capture the incremental causal effect from having $m-1$ new connections in the treatment period to having $m$.

Our methodology is presented in the form of Algorithm~\ref{algo:ent}. The 
input data include the pre-treatment and post-treatment networks, $\Gm$ and $\Gp$, population covariates denoted by $\X$, the realized population treatment vector $Z$, and the observed outcomes $Y$.
The output is an estimate of the causal effect, which relies on subclassification on the (entanglement-adjusted) propensity scores. The main idea in the procedure, which was also described in Section~\ref{sec:challenges}, 
is in Line 4 where we estimate the propensity scores correctly by sampling over the conditional distribution of $\Gp$, and then applying the treatment definition.


\begin{algorithm}[t!]
\setstretch{1.6}
    \SetKwInOut{Input}{Input}
    \SetKwInOut{Output}{Output}
    \Input{Input data: (i) networks $\Gm,\Gp$, (ii) covariates $\X = \{X_1, X_2, \ldots, X_n\}$, (iii) treatment vector $Z=(Z_1, Z_2, \ldots, Z_n)$, \
    (iv) outcomes $Y = (Y_1, Y_2, \ldots, Y_n)$.}
    \Output{Estimate of treatment effect $\hat\tau_m$ of $\tau_m$.}

    Compute the treatment assignments $Z_i=f_i(\Gm, \Gp)$.

    Let $\Gp|\Gm,\X$ be modeled via $p(\cdot|\theta,\Gm,\X)$---obtain 
    estimate $\hat\theta$.

    Use $\hat\theta$ to sample $\Gp_{(b)}$, for $b=1,\dots,B$, conditional on observed $\Gm$.

    Use samples $\{\Gp_{(b)}\}$ to compute $\hat e(l,X_i) = p(Z_i=l | X_i)$ 
    using the empirical frequencies:
    $$\hat e_{i, l} =  \hat{e}(l, X_i) = \frac{1}{B} \sum_{b=1}^B \mathbb{I}\{f_i(\Gm, \Gp_{(b)})=l\}.$$

    Subclassify units according to pairs 
    $(\hat e_{i, m-1}, \hat e_{i, m})$. That is, units should be grouped together if both pair values are similar.
    
Obtain estimates of $\tau_m$ within classes, and combine 
estimates across classes into final $\hat\tau_m$, following classical propensity
score methodology~(see Equations~\eqref{eq:est_sub1} and~\eqref{eq:est_sub2}).
  
    \Return{$\hat\tau_m$.}
    \caption{
    Estimation of treatment effects accounting for entangled treatment\vspace{5px} }
    \label{algo:ent}
\end{algorithm}

Several steps in our procedure could in fact be tuned by the substantive scientist to better accommodate a particular problem:\begin{itemize}
\item The choice of treatment definition needs to be specific 
in Line 1. Such definition depends on the particular entanglement in the application. Some examples were given in Section~\ref{sec:entanglement}. The choice should take into account 
the constraints in Line 4, where the propensity score to treatment level $l$
depends on the likelihood of observing $\Gp$ such that $f_i(\Gm, \Gp) = l$. A poor choice of $f_i$ might lead to a situation where interesting treatment levels are unlikely to be observed in practice or simulation.

\item The choice of network evolution model in Line 2 and the technique for estimating model parameters $\theta$ are both highly domain-specific and depend on subjective judgement and knowledge. The choices here could be as varied as the Exponential Random Graph Model~\citep{frank1986markov} to the general class of latent space models~\citep{hoffrafteryetal2002}.
Additionally, specific econometric or game-theoretic considerations may be taken into account while modeling  network evolution~\citep{galeotti2006network, jackson2010social, chandrasekhar2011econometrics, graham2015methods}.
\item In our procedure, sampling of $\Gp$ conditional on observed data 
is done via a parametric bootstrap approach. A fully Bayesian approach is also reasonable, where sampling of $\Gp$ could be done via the full predictive distribution of $\Gp$~\citep{gelman2003bayesian}. 
The network model could also be completely nonparametric, as long as sampling of $\Gp$ from its conditional distribution is possible.

\item Lastly, a clustering technique needs to be chosen in Line 5 in order to subclassify units based on a multidimensional propensity score. A natural choice, which we use in the experiments of Section~\ref{sec:experiments}, is $k$-means clustering where 
the pairs $\{(\hat e_{i, m-1}, \hat e_{i, m})\}$ are split in $k$ clusters, and causal estimates are taken within each cluster.
The same approach could be applied using alternative clustering techniques~\citep{friedman2001elements}.
\end{itemize}

Our methodology, as described in Algorithm~\ref{algo:ent}, can therefore admit a wide range of concrete realizations. One common characteristic of all such realizations is the use of 
subclassification on the propensity score for estimation of causal effects, whereas the differentiating factor among them is the actual propensity score model that each realization entails. Therefore, to understand the performance of of our methodology 
it is important to understand how similar its causal inferences are to inferences from the true propensity score model or from a misspecified model that ignored the treatment entanglement.
 In the following section we aim to understand theoretically such 
 similarity by evaluating the similarity in subclassification of experimental units by different 
 propensity score models, and argue empirically that this is an adequate measure of similarity of their respective 
 causal inferences.
 
\section{Theory} 
\label{sec:theory}

Here, we develop theory on the subclassification similarity between propensity score models, particularly between the true propensity score model and a misspecified one. In Sections~\ref{sec:inner} and~\ref{sec:dyadic} we show that such similarity can explain the difference in the causal inferences of network propensity score models with varying degrees of misspecification.

\subsection{Subclassification similarity of propensity score models}
\label{section:similarity}
\citet{rosenbaum1983central} state that poor estimation of the propensity score can introduce bias into the estimation of causal effects. If two propensity score models subclassify the units differently, the causal estimates will also be different. As such, by studying the similarity in subclassification between the true propensity score model and a misspecified model we also learn about the bias of the causal estimates from the misspecified model. The following definition formalizes such similarity.

\begin{definition}[Subclassification similarity]
\label{def:similarity}
Let $K$ be the number of classes, and let $\mathrm{Sym(K)}$ denote the set of all possible permutations of the elements of $\mathcal{K}=\{1, 2, \ldots, K\}$. Let $\sigma(k)$ denote the element of $\mathcal{K}$ that $k\in\mathcal{K}$ maps to under permutation $\sigma$.
Then, the conditional subclassification similarity between models $e$ and $m$ given covariates $\X$ is defined as:
$$
\accX{m}{e} = \max_{\sigma\in\mathrm{Sym(K)}} \frac{1}{N} \sum_{k=1}^K \sum_{i=1}^N R_{ik}^m R_{i\sigma(k)}^e.
$$
The subclassification similarity between models $e$ and $m$ is defined as 
$\acc{m}{e} = \Ex{\accX{m}{e}}$, where the expectation is over the distribution of $\X$.
\end{definition}

{\em Remarks.} 
In simple terms, the subclassification similarity calculates how 
frequently two models subclassify some unit in the same class.
Considering permutations in the definition is crucial because 
two propensity score models may look different in terms of their values but may induce identical subclassification.
For instance, suppose that $m(x) = 1-e(x)$. Then, units that are grouped together under $e$ will also be grouped together under $m$, even though the individual propensity scores from the two models are very different.

Definition~\ref{def:similarity} satisfies several important properties that are desirable for similarity functions: (i) it is normalized to takes value in 
$[0, 1]$; 
(b) it is symmetric, since $\accX{m}{e} = \accX{e}{m}$;
and (c) it is invariant to monotonic transformations. To see this, 
note that $R_{ij}^f = R_{ij}^m$ for every function $f(x) = h(m(x))$, where 
$h$ is monotone. Thus, $\accX{h(m)}{e} = \accX{m}{e}$. 
Invariance to monotone transformations is crucial because subclassification-based causal inference, as typified in Equation~\eqref{eq:est_sub2}, is itself invariant to monotone transformations of the propensity score, since $\hat\tau(Y; X, m) = \hat\tau(Y; X, h(m))$, for any $X, Y$ and monotone  $h$.

\subsection{Approximate subclassification similarity}
\label{section:approximate_similarity}

Unfortunately, $\acc{\cdot}{\cdot}$ is computationally intractable because 
it is defined as an extremum over a complete permutation set. 
Following~\citet{toulis2017volfovsky} we consider an approximate subclassification similarity function that shares the same qualities but is easier to work with, assuming differentiability of propensity scores.
\begin{definition}
\label{def:J}
The approximate subclassification similarity, denoted by $\J{m}{e}$, between propensity score models $m$ and $e$ is defined as follows:
\begin{align}
\label{eq:J}
\J{m}{e} = |E\left(\cos(\nabla m(X_i), \nabla e(X_i))\right) |,
\end{align}
where $\cos(a,b)=\frac{a^\top b}{\|a\|\|b\|}$ is the cosine between vectors 
$a$ and $b$, the gradient is with respect to argument $X_i$, and the expectation is with respect to the distribution of $\X$.
\end{definition}

{\em Remarks.} The geometric intuition behind Definition~\ref{def:J}  is that Equation~\eqref{eq:J} approximates how similarly the contour surfaces of models $m$ and $e$ are oriented in space. For example, at one extreme 
the contour surfaces are perpendicular to each other, meaning that 
the two models subclassify units very differently.
This is reflected in the approximate similarity value being equal to zero (minimum possible value) since the gradients $\nabla e$ and $\nabla m$ are orthogonal to each other. At the other extreme, the contours have the same orientation, 
meaning that the two models subclassify units identically.
This is reflected in the approximate similarity value being equal to one (maximum possible value) since the gradients $\nabla e$ and $\nabla m$ are collinear.
Formally, approximate similarity satisfies all three crucial properties of true subclassification similarity in Definition~\ref{def:similarity}, the most important being invariance under monotone 
transformations~\citep{toulis2017volfovsky}.

%
An analytical result for approximate subclassification similarity can be obtained if we assume a parameterized propensity score model $m(x) = m_\beta(x)= h(x^\top\beta)$, where $h$ is monotone. 
Such linear models are widely used in practice as part of the classical methodology.
The following theorem shows that the approximate subclassification similarity between $m_\beta(x)$ and the true model $e(x)$ can be expressed as a projection of $m_\beta$ onto $e$.

\newcommand{\ngrad}[1]{\nabla_{#1}}
\begin{theorem}
\label{theorem:general}
Let $e(x)$ be the unknown true propensity score model.
Let $m_\beta(x) = h(x^\top\beta)$, which may be misspecified, with $h : \mathbb{R} \to \mathbb{R}$ being a differentiable
monotone function.
For a vector-valued function, $g$, of covariates, let $\ngrad{g}$ denote the expected normalized gradient, that is,   $\ngrad{g}= \Ex{\nabla g(X_i) / || \nabla g(X_i)||}$.
Then,
\begin{align}
\label{eq:theorem_general}
\J{m_\beta}{e} = \left |\ngrad{m_\beta} ^\top \ngrad{e} \right |=
\frac{|\beta^\top \ngrad{e}|}{||\beta||}.
\end{align}
If treatment is binary, then $\beta$ and $e(x)$ are connected through the moment equation:
\begin{align}
\label{eq:theorem_general2}
\Ex{\frac{e(X_i)}{h(X_i^\top\beta)} \frac{h'(X_i^\top\beta)}{1-h(X_i^\top\beta)} X_i} = \Ex{\frac{h'(X_i^\top\beta)}{1-h(X_i^\top\beta)} X_i}.
\end{align}

\end{theorem}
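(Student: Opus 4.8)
The plan is to treat the two displayed identities separately, since they rely on different mechanisms. For Equation~\eqref{eq:theorem_general} I would begin by computing the gradient of the index model directly: by the chain rule, $\nabla m_\beta(x) = h'(x^\top\beta)\,\beta$. The crucial observation is that, because $h$ is monotone, $h'$ never changes sign, so the \emph{normalized} gradient $\nabla m_\beta(x)/\|\nabla m_\beta(x)\| = \operatorname{sgn}(h')\,\beta/\|\beta\|$ is constant in $x$. Hence the expected normalized gradient collapses to a deterministic unit vector, $\ngrad{m_\beta} = \operatorname{sgn}(h')\,\beta/\|\beta\|$.

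Next I would substitute into Definition~\ref{def:J}. Writing the cosine as the inner product of normalized gradients, $\cos(\nabla m_\beta(X_i), \nabla e(X_i)) = \bigl(\nabla m_\beta(X_i)/\|\nabla m_\beta(X_i)\|\bigr)^\top\bigl(\nabla e(X_i)/\|\nabla e(X_i)\|\bigr)$, and using that the first factor equals the constant $\operatorname{sgn}(h')\,\beta/\|\beta\|$, this factor pulls out of the expectation. Thus $\Ex{\cos(\nabla m_\beta(X_i),\nabla e(X_i))} = \operatorname{sgn}(h')\,\beta^\top\ngrad{e}/\|\beta\| = \ngrad{m_\beta}^\top\ngrad{e}$, and taking the absolute value in Definition~\ref{def:J} eliminates the sign, yielding both $|\ngrad{m_\beta}^\top\ngrad{e}|$ and $|\beta^\top\ngrad{e}|/\|\beta\|$ simultaneously.

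For the moment equation~\eqref{eq:theorem_general2} the key is to make explicit how $\beta$ is pinned down: in the binary treatment case the linear model is fit by maximum likelihood, so $\beta$ solves the population first-order condition $\Ex{\nabla_\beta\ell_i(\beta)}=0$, where $\ell_i(\beta) = Z_i\log h(X_i^\top\beta) + (1-Z_i)\log\bigl(1-h(X_i^\top\beta)\bigr)$ is the Bernoulli log-likelihood. I would differentiate to obtain the score $\nabla_\beta\ell_i = h'(X_i^\top\beta)\bigl[\tfrac{Z_i}{h} - \tfrac{1-Z_i}{1-h}\bigr]X_i$ (abbreviating $h=h(X_i^\top\beta)$), then condition on $X_i$ and replace $Z_i$ by $e(X_i)=\Ex{Z_i\mid X_i}$ via the tower property. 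Combining the bracket over a common denominator yields the compact form $\tfrac{e(X_i)-h}{h(1-h)}$; splitting the resulting zero-expectation identity into its two additive pieces and moving one term to the right-hand side produces exactly the stated equation.

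I expect the genuinely load-bearing steps to be the two conceptual identifications rather than any heavy computation: in the first part, recognizing that monotonicity forces the normalized gradient of an index model to be constant, so that it escapes the expectation; and in the second part, recognizing that $\beta$ is the maximum-likelihood solution, so that the moment equation is nothing but its population score identity. The only place demanding mild care is the sign bookkeeping from $\operatorname{sgn}(h')$, which is absorbed cleanly by the outer absolute value in Definition~\ref{def:J}, and the verification that $h$ strictly monotone keeps $\|\nabla m_\beta\|$ away from zero so the normalization is well defined.
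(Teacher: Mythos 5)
Your proposal is correct and follows essentially the same route as the paper's proof: the first identity via the observation that the normalized gradient of the index model is the constant $\pm\beta/\|\beta\|$ (so it factors out of the expectation and the outer absolute value absorbs the sign), and the second via the population score equation of the Bernoulli maximum-likelihood fit with $Z_i$ replaced by $e(X_i)$ through the tower property. Your explicit $\operatorname{sgn}(h')$ bookkeeping and the remark that strict monotonicity keeps the normalization well defined are, if anything, slightly more careful than the paper's ``$\pm\cos(\beta,w)$'' shorthand.
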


{\em Remarks.}~Theorem~\ref{theorem:general} shows that the 
approximate subclassification 
similarity of the true propensity score model with some linear model depends on the projection of the parameter vector of the linear model
onto the 
expected normalized gradient of the true model.
Geometrically, the expected normalized gradient quantifies the direction of the contours of a function, and so the similarity of propensity score models can also be expressed as 
the inner product of their orientations. When a propensity score model is linear 
the orientation is fixed towards the direction of the parameter, whereas the propensity score contours are hyperplanes perpendicular to this direction.
So, the expected cosine between the two gradients is equal to the cosine of the expectations, which leads to Theorem~\ref{theorem:general}.

The importance of Theorem~\ref{theorem:general} is that it allows us to perform a theoretical analysis of misspecification of classical linear propensity scores when there is entanglement. Such misspecification is a reasonable proxy for bias in causal estimation 
from applying classical methodology in entanglement settings, 
which we confirm empirically in Section~\ref{sec:experiments}.
The use of Theorem~\ref{theorem:general} in practice is illustrated in the theoretical and experimental sections that follow.
In particular, we consider settings where treatment entanglement is defined by well-known network models, and then use Theorem~\ref{theorem:general} to understand whether a classical linear model 
will subclassify units similarly or not with respect to the true model. 
The general picture that emerges is that under typical assumptions 
the classical linear model does not subclassify units similarly to 
the true model.
In Section~\ref{sec:experiments} we show empirically that such dissimilarity directly translates to errors in causal effect estimation.

\subsection{Inner-product network model}
\label{sec:inner}
Consider a setting where there are no connections between units in $\Gm$,
and $\Gp$ is a random network where any two units $i$ and $j$ are 
connected with probability $\expit(a + b X_i^\top X_j)$,
where $a, b$ are constants and $\expit(u) = e^u / (1+e^u)$ is a logistic function. Suppose that $X_i\sim\mathcal{N}(0, \tau^2I)$, independent and identically distributed for every unit $i$.

The treatment for unit $i$ is the difference in degree of $i$ between $\Gp$ and $\Gm$, which implies that $Z_i = d_i(\Gp)$. By independence of individual covariates, it holds that
$$\Ex{Z_i | X_i} = \sum_{j\neq i} \Ex{\expit(a + b X_i^\top X_j) | X_i}
 = (N-1) \Ex{\expit(a + \sigma_i Z)},$$
where $Z$ is standard normal and $\sigma_i^2 = b^2 ||X_i||^2 \tau^2$. Assuming fixed $\tau$ and $b$ without loss of generality, it follows that 
 $E(Z_i | X_i) = r(a,  \sigma_i) $ for some function $r : \mathbb{R} \times \mathbb{R}_+ \to[0, 1]$.  
A key property of $r$ is that it is monotone with respect to $||X_i||$: when $a < 0$ it is monotone increasing with respect to $||X_i||$, and when $a > 0$ it is monotone 
decreasing with respect to $||X_i||$---the proof of this result is given in Appendix~\ref{appendix:approx}.
The marginal propensity score for unit $i$ therefore satisfies 
$e(X_i) = E(Z_i | X_i) = (N-1) \cdot r(a, \sigma_i)$.
The gradient of $e(X_i)$ with respect to vector $X_i$ is
$$
\nabla e(X_i) = (N-1)  \frac{\partial r(a, X_i)}{\partial ||X_i||} \frac{1}{||X_i||} X_i.
$$
Since $X_i\sim\mathcal{N}(0, \tau^2 I)$ it follows that 
$\nabla_e = \Ex{\nabla e(X_i) / ||\nabla e(X_i)||} 
= \mathrm{sign}( \frac{\partial r(a, X_i)}{\partial ||X_i||}) 
\Ex{X_i / ||X_i||} \propto  \Ex{X_i / ||X_i||} = 0$, by symmetry. Monotonicity of $r$ with respect to $||X_i||$ was therefore important in decomposing the expectation into the sign term and the $E(X_i/||X_i||)$ term.

Ignoring the network structure of the problem will likely lead to standard linear propensity score models, such as $m_\beta(X) = \mathrm{expit}(\beta^\top X)$. From Theorem~\ref{theorem:general} we immediately 
conclude that $\J{\ngrad{m_\beta}}{e} = \beta^\top \ngrad{e} / ||\beta|| =0$.
This shows that the linear model is very different in terms of subclassification similarity from the true propensity score model.

This agrees with geometric intuition, as depicted in Figure~\ref{fig:entangled}, because the true propensity score model is 
spherical and the misspecified model is linear. That is, the true model subclassifies units across spheres defined by the norm of the unit covariates, which is depicted as concentric spheres in the figure. 
The linear propensity score model $m_\beta(x)$  subclassifies units across hyperplanes with common orientation defined by vector $\beta$. 
Since any area between two hyperplanes can traverse all spheres, it follows that units in the same class in the subclassification of  $m_\beta(x)$ may actually belong to very different classes in the 
subclassification from $e(x)$. 
Thus, the causal estimate from subclassification on $m_\beta(x)$ will be significantly biased because it includes comparisons of units that are widely different from each other.

\newcommand{\sphere}[2]{
    \draw (-#1,0) arc (180:360:#1cm and #2cm);
    \draw[dashed] (-#1,0) arc (180:0:#1cm and #2cm);
    \draw (0,0) circle (#1cm);
    \shade[ball color=blue!10!white,opacity=0.20] (0,0) circle (#1cm);
}

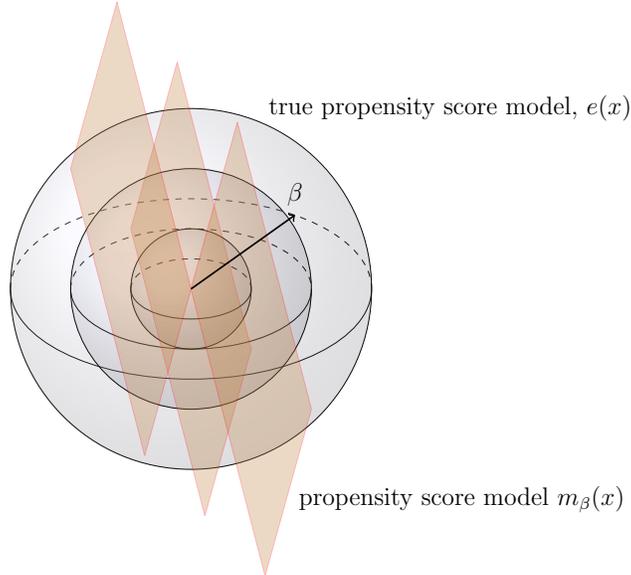
\begin{figure}[t!]
\centering
\scalebox{.8}{
\begin{tikzpicture}
\sphere{3}{1.5}
\sphere{2}{1}
\sphere{1}{0.5}
   \draw[fill=brown,draw=red,opacity=.3,very thin,line join=round] 
   (-1, 1, 0) -- 
   (-1 , 3, -2) --
   (1, -1, 0) --
   (1, -3, 2) --cycle ;

  \draw[fill=brown,draw=red,opacity=.3,very thin,line join=round] 
   (-2, 2, 0) -- 
   (-2 , 4, -2) --
   (0, 0, 0) --
   (0, -2, 2) --cycle ;
   \draw[fill=brown,draw=red,opacity=.3,very thin,line join=round] 
   (0, 0, 0) -- 
   (0 , 2, -2) --
   (2, -2, 0) --
   (2, -4, 2) --cycle ;
   

\node at (4.5, 3.2, .5) {true propensity score model, $e(x)$};
\node at (4.5, -3.5, 0) {propensity score model $m_\beta(x)$};

\draw[thick,->] (0,0,0) -- (2.5, 2, 2) node[anchor=south]{$\beta$};
\end{tikzpicture}
    }
    \caption{The contour surfaces for the true propensity score model, $e(x)$, are spherical because they only depend on the norm $||x||$. 
    The contour surfaces of linear model $m_\beta(x)$ are hyperplanes oriented by vector $\beta$. The planes cut through all spheres and so
units that are classified together with respect to $m_\beta(x)$ may actually belong to disparate classes with respect to the true model $e(x)$. This dissimilarity between the two propensity score models is captured by the approximate similarity quantity, $\J{m_\beta}{e}$ (of Definition~\ref{def:J}), which 
here attains its minimum possible value.
}
    \label{fig:entangled}
\end{figure}

\newcommand{\Xidot}{X_{i\bullet}}
\subsection{General dyadic network model}
\label{sec:dyadic}
In this section we consider a more realistic network model where the probability of an edge between units $i$ and $j$ is 
equal to $\expit(a+bX_{ij})$, and $X_{ij}$ is not necessarily decomposable into additive or multiplicative sender and receiver effects \citep{hoffrafteryetal2002,hoff2013likelihoods,airoldi2008mixed}.
For simplicity, let's assume that dyadic covariates are scalar, so that  $X_{ij}\in\mathbb{R}$, for every $i, j$.
Define binary treatment $Z_i$ as having at least one new neighbor, so that:
$$E(Z_i |\Xidot)= e(\Xidot) = 
1 - \prod_j (1 - \expit(a + b X_{ij})),$$
where $\Xidot = (X_{i1}, \ldots, X_{iN})\in\mathbb{R}^N$ can be construed as the individual-level covariates.
%
Consider an alternative linear propensity score model:
$m(\Xidot)=\expit(\gamma +\delta \Xidot^\top\mathbf{1}/n)$
 where $\gamma$ and $\delta$ are scalar, $\mathbf{1}$ is a column of ones and
 $\Xidot^\top\mathbf{1}/n$ is the average 
 of dyadic covariates for every $i$. This model projects the dyadic covariates to a particular summary for each unit (here, the mean). Such projection is necessary to avoid overfitting; for example, 
 a model of the form $m(\Xidot) = \expit(a + b^\top \Xidot)$ would have to fit $N+1$ parameters with $N$ data points.

The gradients for the two models, namely true model $e$ and misspecified model $m$,  with respect to the individual-level covariates are given by
$$\nabla e(\Xidot) =h(\Xidot)b\begin{pmatrix}
		\expit(a+bX_{i1})\\
		\vdots\\
		\expit(a+bX_{iN})
	\end{pmatrix} = h(\Xidot)b \cdot \mathbf{expit(a+b \Xidot)},
	\text{ and }\nabla m(\Xidot) = g(\Xidot)\delta \mathbf{1},$$
where $h$ and $g$ are positive scalar-valued functions, $\mathbf{1}$ is a column vector of ones, and $\mathbf{expit(a+b\Xidot)}$ is a column vector of $\expit$ values of
$\Xidot$, calculated element-wise.
A straightforward application of Theorem~\ref{theorem:general} implies that:
$$
J[m,e] = | E(\cos(\nabla m(\Xidot),\nabla e(\Xidot)))| = \mid E(\cos(1,
	\mathbf{expit(a+bX)})\mid.
$$

From this result it is clear that if the dyadic covariates, $X_{ij}$, are independent and identically distributed, then $m$ yields the same subclassification as $e$, that is $\J{m}{e} = 1$.
This is because the homogeneity and independence of $X$s will
make the expectation of each element of $\mathbf{expit(a+bX)}$ 
be the same, and so proportional to the vector of ones.
However, individuals are unlikely to be marginally identical, with variability coming in the form of different means, say, to account for popularity and sociability, and variances to account for homophily~\citep{granovetter1983strength}. In such cases we expect the subclassification of the misspecified model to be different that the true model, leading to bias in causal estimation. We 
explore these issues in the experiments of Section~\ref{sec:experiments}.

\section{Simulated experiments}
\label{sec:experiments}
Here, we illustrate the inherent biases of classical propensity score methodology with network treatments. In two simulation studies we evaluate the two theoretical settings of Section~\ref{sec:theory}. In the first simulation we consider the setting of Section~\ref{sec:inner} where the network model depends on the inner product of covariates. The covariates are chosen so that nodes that appear close in covariate space---essentially what a classical propensity score model attempts to infer---are in fact very far in network space. To make this explicit, this simulation is kept artificially small. In the second simulation we consider the much more general setting of dyadic covariates in the network model as in Section~\ref{sec:dyadic}. We illustrate that as the number of units grows, the effects of ignoring the treatment entanglement grow as well, measured as squared error from the true treatment effect.

\subsection{Multiplicative covariates simulation}
\label{section:simulation}
Consider five units, each having a one-dimensional covariate $X_i\in\mathbb{R}$; the pre-treatment network $\Gm$ has no edges and the post-treatment network $\Gp = (g_{ij}^+)$ has a 
probability distribution such that the connection $g_{ij}^+$ between two units $i$ and $j$ is an independent Bernoulli:
\begin{align}
\label{eq:propensity_true}
P(g_{ij}^+ =1| \Gm, \X) \propto \exp(X_i X_j + 1.0).
\end{align}
As before, $e(l, X_i)$ denotes the probability that unit $i$ makes $l$ new connections in total. This is a case of the dyadic network model that was analyzed in Section~\ref{sec:dyadic}, where we showed that classical propensity score methodology can be severely biased, especially in cases where covariates have a symmetric distribution around zero.

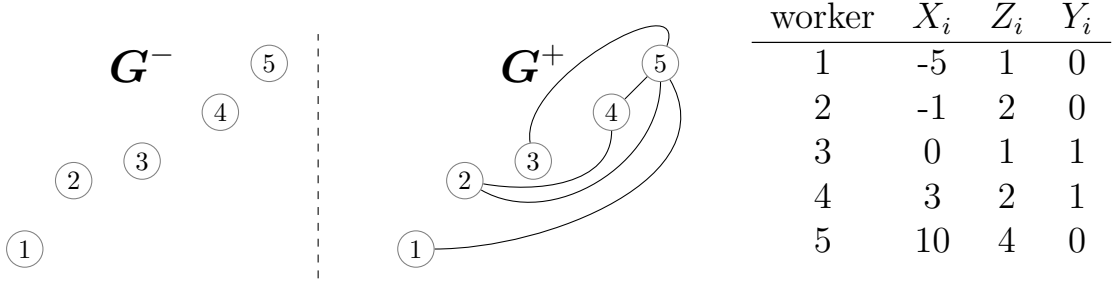
\begin{figure*}[!t]
\begin{subfigure}{.5\textwidth}
\begin{tikzpicture}[scale=1.3]
\drawunit{1}{0.3}{white}{1}
\drawunit{1.5}{1}{white}{2}
\drawunit{2.2}{1.2}{white}{3}
\drawunit{3}{1.7}{white}{4}
\drawunit{3.5}{2.2}{white}{5}

\draw [dashed] (4, 0) -- (4, 2.5);

\drawunit{4+1}{0.3}{white}{1}
\drawunit{4+1.5}{1}{white}{2}
\drawunit{4+2.2}{1.2}{white}{3}
\drawunit{4+3}{1.7}{white}{4}
\drawunit{4+3.5}{2.2}{white}{5}


\path (2) edge [out=-30, in=-90] (5);
\path (2) edge [out=-10, in=-90] (4);
\path (3) edge [out=100, in=70] (5);
\path (1) edge [out=0, in=-60] (5);
\draw (4)--(5);

\node at (2.2, 2.2) {\Large $\boldsymbol{G}^{-}$};
\node at (4+2.2, 2.2) {\Large $\boldsymbol{G}^{+}$};
\end{tikzpicture}
\end{subfigure}%
\begin{subfigure}{.5\textwidth}
\centering
\resizebox{140px}{!}{%
\begin{tabular}{cccc}
worker & $X_i$ & $Z_i$ & $Y_i$ \\
\hline
1      & -5   & 1  &   0 \\
2      & -1    & 2  &    0 \\
3      & 0     & 1  &  1   \\
4      & 3    & 2 & 1 \\
5     &  10 &  4 & 0
 \end{tabular}
}
\end{subfigure}
\caption{{\em Left:} The networks before ($\Gm$) and after ($\Gp$) 
the presumed intervention. {\em Right:} Observed data: $X_i$ is the 
covariate value for worker $i$; $Z_i$ is the treatment of $i$, i.e.,  
the number of new connections that worker $i$ made in $\Gp$; $Y_i$ denotes the outcome of worker $i$, where $Y_i=1$ if worker $i$ switched industries after the treatment period, and $Y_i=0$ otherwise.}
\label{figure:example}
\end{figure*}

Our goal is to use the data shown in Figure \ref{figure:example}
to estimate $\tau_2 = E(Y_i(2)-Y_i(1))$, i.e., the causal effect of making two new 
connections relative to making just one.
%
Let $\mathcal{S}(e)$ denote the set of units that have similar pairs of propensity scores $(e(1,X),e(2,X))$. 
The estimator will be of the form
\begin{align}
\label{eq:estimator}
\hspace{-.2cm}\hat{\tau}_{2}^e{\!\:{=}\!\:}\Ave(Y_i | i \in \mathcal{S}(e), Z_i=2){\!\:{-}\!\:}\Ave(Y_i | i \in \mathcal{S}(e), Z_i=1),
\end{align}
where $\Ave$ takes the average of outcomes for the units specified 
in its conditional statement; for instance $\Ave(Y_i|i\in \{1,2\})
= (1/2) (Y_1 + Y_2)$.
With this formulation, we compare a method that relies on the true propensity scores using model \eqref{eq:propensity_true} with the classical propensity score method that ignores the network structure, instead fitting a Poisson 
regression model. Based on the data of Figure~\ref{figure:example} 
the fitted propensity score model is given by:
\begin{align}
\label{eq:poisson}
m(l, X_i) = P(Z_i=l|X_i) & \propto \mathrm{Pois}(\lambda_i),\
\log \lambda_i  = 0.45 + 0.09 X_i,
\end{align}
where ``$\mathrm{Pois}$'' denotes the Poisson density, and 
the parameter estimates are rounded to two decimal points.
Table \ref{table:propensities} contains the estimated propensity scores for five units using Algorithm \ref{algo:ent} based on the true propensity score model in \eqref{eq:propensity_true} and on the incorrect propensity score model in \eqref{eq:poisson}. The tables also outline the subclassifications due to the two models, using the estimated propensities for $Z_i=1$ and $Z_i=2$: $\mathcal{S}(e)=\{1,2,4,5\}$ and $\mathcal{S}(m)=\{1,2,3,4\}$.  Unsurprisingly, the subclassifications lead to different estimates of the causal effect:
$\hat\tau_2^e = 0.5\text{ and }\hat\tau_2^m=0.$
In absolute value the bias is 0.5, which is substantial because the range of estimands is in [-1, 1] as outcomes are binary.

%
%
\begin{table}[!t]
\centering
\begin{subtable}{.45\linewidth}
\renewcommand*{\arraystretch}{1.1}
\hspace{-1cm}
\begin{tabular}{c|rrrrrr}
&  \multicolumn{5}{c}{propensity for $Z_i=\ldots$} \\
unit $(i)$ & 0 & 1 & 2 & 3 & 4 & \ldots\\ 
  \hline
1 & 0.00 & \marktopleft{c0}0.27 & 0.73 & 0.00 & 0.00 & \ldots \\ 

  2 & 0.00 & 0.24 & 0.67\markbottomright{c0} & 0.09 & 0.00& \ldots \\ 
 
  3 & 0.01 & 0.06 & 0.23 & 0.42 & 0.28 & \ldots \\ 

  4 & 0.00 & \marktopleft{c1}0.24 & 0.68 & 0.09 & 0.00 & \ldots \\ 
  5 & 0.00 & 0.27 & 0.73\markbottomright{c1} & 0.00 & 0.00 & \ldots
\end{tabular}
\end{subtable}
\begin{subtable}{.52\textwidth}
\renewcommand*{\arraystretch}{1.1}
\centering
\begin{tabular}{c|rrrrrrr}
& \multicolumn{7}{c}{propensity score for $Z_i=\ldots$} \\
unit $(i)$ & 0 & 1 & 2 & 3 & 4 & \ldots \\ 
  \hline
  1 & 0.37 & \marktopleft{c3}0.37 & 0.18 & 0.06 & 0.02 &  \ldots \\ 
  2 & 0.24 & 0.34 & 0.25 & 0.12 & 0.04 &  \ldots \\ 
  3 & 0.21 & 0.33 & 0.26 & 0.13 & 0.05 & \ldots \\ 
  4 & 0.13 & 0.26 & 0.27\markbottomright{c3} & 0.19 & 0.10 & \ldots\\ 
  5 & 0.02 & 0.08 & 0.15 & 0.20 & 0.20 &  \ldots \\ 
\end{tabular}
\end{subtable}
\caption{Propensity scores from two different models. 
The left panel is based on Algorithm \ref{algo:ent} using true model \eqref{eq:propensity_true}. The right panel is based on  Algorithm \ref{algo:ent} using the misspecified Poisson~\eqref{eq:poisson}. Units within dashed lines are subclassified together as having similar propensities to receive $Z_i=1$ and $Z_i=2$. The misspecified model leads to incorrect subclassification and, consequently, bias in causal inference.}
\label{table:propensities}
\end{table}

The explanation for such bias is straightforward. 
The graph $\Gp$ in the data of Figure~\ref{figure:example} is an atypical 
sample from its true distribution in Equation~\eqref{eq:propensity_true}. In particular, 
unit 3 is observed to have only one connection in $\Gp$. 
However, this unit has $X_3=0$, which implies that the probability that unit 3 connects to {\em any} unit is $e/(1+e)\approx0.73$, by model \eqref{eq:propensity_true}, and thus the unit should make $0.73 \cdot 4=2.92$ new connections in expectation; this can be verified by 
computing $\Ex{Z_3}$ in the left panel of Table~\ref{table:propensities}.
As mentioned earlier, the classical methodology estimates the 
propensity scores conditional on the observed post-treatment network $\Gp$, and thus underestimates the propensity scores for unit 3. 
Additionally, the large number of connections of unit 5 ($Z_5=4$) influences the Poisson model substantially, leading to the association of higher covariate $X$ values with a higher number of connections. This contributes to underestimating the propensity scores of unit 3 because $X_3=0<<10=X_5$.
This underestimation eventually leads to wrong subclassification and  biased estimates of the causal effect. 

\subsection{Large simulation studies}
The simulations in this section first consider the general setting of Section~\ref{sec:dyadic}, and then a generalization to probabilistic entanglement. Throughout this section, $\Gm$ has no edges and 
for $\Gp$ the probability of an edge is given by $P(g^+_{ij} =1) = \expit{(a_i/2+a_j/2+b X_{ij})}$, where $a_i$ can be thought of as an unobserved measure of  individual popularity (or sociability) while $X_{ij}\sim N(0,1)$ is some dyadic covariate. 

Specifically, we consider two settings where the covariates and network are symmetric ($X_{ij} = X_{ji}$ and $g^+_{ij} = g^+_{ji}$) and 
the treatment is either (1) ``making at least 1 new friend'' (that is $Z_i = 1_{\sum_j g^+_{ij} > 0}$), or (2) ``making more than 10 new friends'' (that is $Z_i = 1_{\sum_j g^+_{ij} > 10}$). In the first simulation we generate $a_i\overset{iid}{\sim}N(-5,\sigma^2)$ while in the second we generate $a_i\overset{iid}{\sim}N(-2,\sigma^2)$. In both settings, as $\sigma^2\rightarrow 0$, the probability of an edge approaches $\expit{(a + b X_{ij})}$ as in Section~\ref{sec:dyadic}. Finally, when we consider probabilistic entanglement, we also consider ``having at least 1 new friend'' but we do not enforce $X_{ij} = X_{ji}$ or $g_{ij}=g_{ji}$ and let $a_i\overset{iid}{\sim}N(-5,\sigma^2)$. 


To evaluate results we follow the procedure in \citep{dugoff2014generalizing} and simulate outcome data based on pre-treatment covariate information:
$Y_i(0) = 25 a_i + \epsilon_i,\quad Y_i(1) = Y_i(0) + 10$, where $\epsilon_i$ are independent normally distributed errors with standard deviation $\sigma$. The observed values for each simulation are $Y_{i} = Z_iY_i(1) + (1-Z_i)Y_i(0)$. In Tables~\ref{table:sym}-\ref{table:asym} we report the Root Mean Squared Error, $RMSE=\sqrt{\frac{1}{S}\sum (\widehat{ATE} - ATE)^2}$. We report these values when subclassifying on the true propensities and a misspecified model that estimates the propensity score via a logistic regression with $\sum_j X_{ij}$ as a covariate for unit $i$. Additionally, when studying the treatment ``at least 10 new friends'' we also demonstrate the substantial improvement over the marginal model by fitting a full network model, as proposed in Algorithm~\ref{algo:ent}. In fact we use an asymmetric and misspecified model to demonstrate that modeling the full network behavior is superior to only modeling the margin.
The results of the simulation demonstrate that misspecification via a marginal model can lead to very poor estimation of the ATE. 

\subsubsection{One new friend} 
\label{ssub:one_new_friend}

In Table~\ref{table:sym} we have the results of the first simulation where the network is symmetric and treatment is ``having at least 1 new friend''. Here, we only consider the true propensity score and the misspecified marginal logistic model. The results are reported over 5000 simulated networks. As expected, when the $a_i$ are very different ($\sigma^2$ is large), the misspecified model is unable to adapt to the true propensity model and so the {\rm RMSE} is very large. As $\sigma^2\rightarrow 0$ the discrepancy between the two approaches reduces.
\renewcommand{\arraystretch}{1.4}
\begin{table}[b!]
\centering
\begin{tabular}{ccc}
  \hline
  & \multicolumn{2}{c}{RMSE}\\
 $\sigma$& True model & Misspecified model \\
  \hline
  2.0  & 3.41 & 57.05 \\
  1.0    & 1.43 & 17.71 \\
  0.5  & 0.94 & 5.26 \\
  .25 & 0.77 & 1.88 \\
  .125  & 0.67 & 0.92 \\
  .0625 & 0.57 & 0.60 \\
  .03125 & 0.51 & 0.51 \\
   \hline
\end{tabular}
\caption{RMSE for $N=100$, $5000$ simulations, true model is $\expit{(a_i/2+a_j/2+X_{ij})}$; $a_i$ are randomly generated; $\sigma = \mathrm{SD}(a_i)$ and controls variation in individual propensity scores.
\label{table:sym}}
\end{table}

The experimental results agree with the theory of Section~\ref{sec:dyadic} from where it follows that subclassification similarity between the true model and the misspecified linear model 
is given by $|E(\cos(\mathbf{1}, \mathbf{expit(} a_i/2 + \mathbf{a/2 + b \Xidot)})|$, where $\mathbf{expit(} a_i/2 \cdot \mathbf{1} + \mathbf{a/2 + b \Xidot)})$ is a $N$-component vector with $\expit(a_i/2 + a_j/2 + b X_{ij})$ as its $j$-th element.
Consequently, when $\sigma=0$ and so $a_i=a_j$, the similarity is equal to one because 
$E(\expit(a + b X_{ij})) = E(\expit(a + b X_{ij'}))$ for all $i, j, j'$, since $X_{ij}$ are 
independent and identically distributed. In this case, 
a simple linear model will do just fine is subclassifying units, and hence in estimating the causal effect. However, as $\sigma$ increases 
$\mathbf{expit(} a_i/2 \cdot \mathbf{1} + \mathbf{a/2 + b \Xidot)})$ becomes varied with respect to $i$, which leads to worse subclassification, and thus more biased estimation of the causal effect.

\subsubsection{Multiple new friends} 
\label{sec:multiple_friends}

In Table~\ref{table:long} we report results for the general setting where treatment is defined as ``having at least 10 new friends''. Here, we not only report the results using the true model and the misspecified linear model but also using a network model where the probability of an edge is a function of $X_{ij}$ and a random node effect (we note that this model is still misspecified). The results of the simulation do not differ from the previous ones, but, crucially, we see that fitting even a misspecified network model provides substantial improvement (more than two-fold for higher variance $a_i$) to the estimation of treatment effects when the treatments are entangled 
compared to the classical linear propensity score model.

\begin{table}[t!]
\centering
\begin{tabular}{cccc}
  \hline
  & \multicolumn{3}{c}{RMSE} \\
 $\sigma$  & True model  & Random-effect model & Misspecified model\\
  \hline
  2.0    & 6.6 & 39.9 &  79.1 \\
  1.0    & 2.5 & 25.6 &  35.1 \\
  0.5  & 1.3 & 11.9 &  13.3 \\
  .25  & 1.0 & 4.1  & 4.3 \\
  .125  & 0.8 & 1.5  & 1.5 \\
  0.0625 & 0.7 & 0.8  & 0.8 \\
   \hline
\end{tabular}
\caption{RMSE for $N=100$, $5000$ simulations; true model $\expit{(a_i/2+a_j/2+X_{ij})}$; $a_i$ are random. Random-effect model is a misspecified model for the network with a random effect for each unit.\label{table:long}}
\end{table}

\subsubsection{Probabilistic entanglement} 
\label{sec:prob_ent}

In Table~\ref{table:asym} we report the results for probabilistic entanglement. In this setting edges are directed and so it is possible for person $i$ to connect to person $j$ (and hence be treated) while person $j$ does not connect to person $i$ (and remains untreated). However, the specified model implies that $g^+_{ij}$ is correlated with $g^+_{ji}$ and so it is reasonable that the performance of a marginal misspecified model will be poor. Here, we report the {\rm RMSE} based on the true propensities and the misspecified model over 5000 simulated networks. We see behavior that is very similar to the symmetric version of this simulation---when $\sigma^2$ is large the true propensities perform substantially better than the misspecified model, but when $\sigma^2\rightarrow 0$, the models become indistinguishable. In this simulation there is essentially no entanglement for small $\sigma^2$ and so the misspecified model is in fact the correct one.

\begin{table}[t!]
\centering
\begin{tabular}{ccc}
  \hline
  & \multicolumn{2}{c}{{\rm RMSE}} \\
 $\sigma$& True model & Misspecified model \\
  \hline
 2.0    & 3.46 & 57.01 \\
 1.0    & 1.44 & 17.76 \\
 0.5  & 0.93 & 5.32 \\
 .25  & 0.79 & 1.92 \\
 .125  & 0.68 & 0.93 \\
 .0625 & 0.57 & 0.61 \\
 0.0312 & 0.50 & 0.51 \\
   \hline
\end{tabular}
\caption{RMSE for $N=100$, $5000$ smulations; true model $\expit{(a_i/2+a_j/2+X_{ij})}$; edges $g^+_{ij}$ and $g^+_{ji}$ are positively correlated (probabilistic entanglement).\label{table:asym}}
\end{table}



\section{Concluding remarks}
\label{sec:discussion}
In this paper, we introduce the notion of entanglement, and analyzed 
it when a network of connections between units is observed at two pre-specified time points, and the individual treatment is a function of the possibly endogenous network change.
%
%
We showed that classical propensity score methodology ignores such treatment entanglement leading to causal estimates that may be severely biased.
To characterize the bias we  used a novel practical approximation of 
similarity between propensity score models, which acts as a proxy of how similar 
the subclassification-based causal estimates are from the two models. 
By focusing on well-known dyadic and inner-product network models describing the nature of treatment entanglement, we showed analytically that the similarity between the classical linear propensity score and the true propensity score can be arbitrarily bad, leading to invalid estimates of the causal effects from the classical methodology.~To mitigate such problems we proposed a methodology where we 
take into account the network evolution during the treatment period, 
over which we marginalize to calculate the correct individual propensity scores.
We illustrated our methodology on several simulated settings, which confirmed our theoretical findings.

It is important to note that our approach is not at odds with propensity score methodology, bur rather aims at fixing its problems when there is treatment entanglement. Algorithm~\ref{algo:ent} summarized our proposed approach to estimate individual propensity scores correctly under treatment entanglement, leaving the rest of the propensity score methodology unchanged. The architecture of our methodology is also modular so that  different definitions of treatment entanglement, and different model choices for network evolution could be accommodated; these issues were also discussed in Section~\ref{sec:methodology}.


We note that the problem of treatment entanglement is different than the problem of interference. As described in Section~\ref{sec:prelims}, 
interference is a statement on how an outcome of a unit may be affected by the treatment assignment of other units, whereas entanglement is a statement on how treatment assignment of a unit is affected by the treatment assignment of other units. 
Thus, entanglement 
and interference could both be present in a problem. For example, 
the labor outcomes of a professional worker could depend on the number of professional connections made by other workers. This is a problem where both entanglement and interference are present, which we will address in future work.

We also note that an additional issue with classical propensity score methodology relates to selection on unobservables, where 
self-selection in treatment depends on unobserved factors. In observational studies there is a concern that covariates 
$X$ that are important to explain the self-selection into treatment are missing, and thus the propensity scores cannot be appropriately estimated. For instance, it could be that more socialable workers make more professional connections in our data sample.
This issue is, however, largely orthogonal to the problem of treatment entanglement. 
In this paper, we focused on treatment entanglement because it presents methodological challenges even when all important factors are fully observed. Furthermore, the issue of entanglement has remained unnoticed despite related prior work in causality on networks, and an increasing interest in such problems.



\bibliographystyle{apalike}
\bibliography{main}

\newtheorem*{theorem*}{Theorem}

\appendix
\section{Proof of Theorem 4.3}
\begin{theorem*}
Let $e(x)$ be the unknown true propensity score model.
Let $m_\beta(x) = h(x^\top\beta)$, which may be misspecified, with $h : \mathbb{R} \to \mathbb{R}$ being a differentiable
monotone function.
For a vector-valued function, $g$, of covariates, let $\ngrad{g}$ denote the expected normalized gradient, that is,   $\ngrad{g}= \Ex{\nabla g(X_i) / || \nabla g(X_i)||}$.
Then,
\begin{align}
\J{m_\beta}{e} = \left |\ngrad{m_\beta} ^\top \ngrad{e} \right |=
\frac{|\beta^\top \ngrad{e}|}{||\beta||}.
\end{align}
If treatment is binary, then $\beta$ and $e(x)$ are connected through the moment equation:
\begin{align}
\label{eq:theorem_general2}
\Ex{\frac{e(X_i)}{h(X_i^\top\beta)} \frac{h'(X_i^\top\beta)}{1-h(X_i^\top\beta)} X_i} = \Ex{\frac{h'(X_i^\top\beta)}{1-h(X_i^\top\beta)} X_i}.
\end{align}

\end{theorem*}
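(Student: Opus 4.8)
The plan is to treat the two displays separately: the first as a direct consequence of the linear-index structure of $m_\beta$, and the second as the population first-order condition of the maximum-likelihood fit. For the identity in Equation~\eqref{eq:theorem_general}, I would begin from Definition~\ref{def:J} and compute the gradient of the index model by the chain rule, $\nabla m_\beta(x) = h'(x^\top\beta)\,\beta$. The crucial observation is that $h$ is monotone, so $h'(x^\top\beta)$ never changes sign; consequently the \emph{normalized} gradient $\nabla m_\beta(x)/\|\nabla m_\beta(x)\| = \mathrm{sign}(h')\,\beta/\|\beta\|$ is a \emph{constant} direction, independent of $x$. This alone yields $\ngrad{m_\beta} = \mathrm{sign}(h')\,\beta/\|\beta\|$, and pins down the first factor inside the cosine.

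With the direction of $\nabla m_\beta$ fixed, I would substitute into the cosine of Definition~\ref{def:J}. Because the first argument has constant normalized direction, the cosine factorizes pointwise as
\[
\cos\bigl(\nabla m_\beta(X_i),\,\nabla e(X_i)\bigr) = \mathrm{sign}(h')\,\frac{\beta^\top}{\|\beta\|}\,\frac{\nabla e(X_i)}{\|\nabla e(X_i)\|}.
\]
Taking the expectation over $\X$ pulls the constant $\mathrm{sign}(h')\,\beta/\|\beta\|$ outside, leaving $\beta^\top\ngrad{e}/\|\beta\|$ up to the sign factor; the outer absolute value in $\J{\cdot}{\cdot}$ then removes the sign and delivers both claimed forms simultaneously. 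The point worth stressing is that the expected cosine collapses to the cosine of the expected normalized gradients \emph{precisely because} $m_\beta$ is an index model, so its gradient direction does not fluctuate with $X_i$; this is what makes the projection interpretation exact rather than approximate.

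For the moment equation~\eqref{eq:theorem_general2}, I would read $\beta$ as the population value obtained from maximum-likelihood fitting of the binary model $m_\beta$. The per-unit log-likelihood contribution is $Z_i\log h(X_i^\top\beta) + (1-Z_i)\log\bigl(1-h(X_i^\top\beta)\bigr)$, and differentiating in $\beta$ produces the score $\bigl(\tfrac{Z_i}{h} - \tfrac{1-Z_i}{1-h}\bigr)h'\,X_i$, writing $h=h(X_i^\top\beta)$. At the population optimum the expected score vanishes, and since $E(Z_i\mid X_i) = e(X_i)$ by definition of the true propensity score, I would replace $Z_i$ by $e(X_i)$ inside the expectation. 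The remaining step is purely algebraic: splitting $\tfrac{1-e(X_i)}{1-h}$, moving the $e(X_i)$-free term to the right-hand side, and using the partial-fraction identity $\tfrac1h + \tfrac1{1-h} = \tfrac1{h(1-h)}$ folds the two $e(X_i)$ terms into the single factor $\tfrac{e(X_i)}{h}\tfrac{h'}{1-h}X_i$, reproducing Equation~\eqref{eq:theorem_general2} exactly.

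The only conceptual care needed is the bookkeeping in the first part: pulling $\mathrm{sign}(h')$ out of the expectation requires that $h'$ be everywhere of one sign, which monotonicity of $h$ guarantees, and that $\nabla e(X_i)$ be nonzero almost surely so the normalization is well defined. Beyond that, both halves reduce to routine differentiation and the partial-fraction identity above, with no delicate analytic estimates required.
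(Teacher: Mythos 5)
Your proposal is correct and follows essentially the same route as the paper's proof: for the first display, both arguments exploit that $\nabla m_\beta(x)=h'(x^\top\beta)\beta$ has constant direction (up to the fixed sign of $h'$), so the cosine collapses to $\pm\cos(\beta,\nabla e(X_i))$ and the expectation acts only on the normalized gradient of $e$; for the second, both derive the moment equation as the vanishing expected score of the binary likelihood $Z_i\log h + (1-Z_i)\log(1-h)$, replacing $Z_i$ by $e(X_i)$ via iterated expectations and combining terms with $\tfrac1h+\tfrac1{1-h}=\tfrac1{h(1-h)}$. Your explicit remarks on sign bookkeeping and on $\nabla e(X_i)\neq 0$ almost surely are slightly more careful than the paper's, but the substance is identical.
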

\begin{proof}
By definition of model $m_\beta$, for any vector $w$ it holds that
 $$\cos(\nabla_x m_\beta(x), w) = \cos(h'(x^\top\beta) \beta, w) = 
 \pm\cos(\beta, w),$$
since $h$ is monotone. It follows that
\begin{align}
|\Ex{\cos(\nabla m_\beta(X), \nabla e(X))}|
=  |\Ex{\cos(\beta, \nabla e(X))}|
= |\Ex{\frac{\beta^\top \nabla e(X)}{||\beta|| ||\nabla e(X)||}}|
= \frac{|\beta^\top \ngrad{e}|}{||\beta||}.\nonumber
\end{align}
Note that $\ngrad{m_\beta} = \beta/||\beta||$ since $\nabla_x m_\beta(x)$ is collinear 
with $\beta$, as shown earlier.
The second part of the theorem follows from standard theory of statistical inference. Since treatment is binary $P(Z_i=1; X_i, \beta) = h(X_i^\top\beta)^{Z_i} (1-h(X_i^\top\beta))^{1-Z_i}$.
Let $h_i = h(X_i^\top\beta)$, then we can  write the log likelihood for all units as follows:
$\ell(\beta; \X, Z) = \sum_i Z_i \log h_i+ \sum_i (1-Z_i) \log(1 - h_i)$.
Since $\nabla h_i = h'(X_i^\top\beta) X_i \equiv h_i' X_i$, 
the gradient of the log likelihood (with respect to $\beta)$ is equal to
$\nabla \ell = \sum_i Z_i \frac{h_i'}{h_i} X_i   - \sum_i(1-Z_i) h_i' / (1-h_i) X_i$.
For the maximum likelihood equation we take the derivative to obtain 
\begin{align}
\label{eq953}
\sum_i Z_i [h_i'/h_i + h_i'/(1-h_i] X_i  & = \sum_i h_i' / (1-h_i) X_i\nonumber\\
\Ex{\frac{Z_i}{h(X_i^\top\beta)} \frac{h'(X_i^\top\beta)}{1-h(X_i^\top\beta)} X_i} & = \Ex{\frac{h'(X_i^\top\beta)}{1-h(X_i^\top\beta)} X_i} \nonumber\\
\Ex{\frac{e(X_i)}{h(X_i^\top\beta)} \frac{h'(X_i^\top\beta)}{1-h(X_i^\top\beta)} X_i} & = \Ex{\frac{h'(X_i^\top\beta)}{1-h(X_i^\top\beta)} X_i}.
\end{align}
Equation~\eqref{eq953} may be considered with respect to the empirical distribution of $X_i$ in the sample if the analysis is in the sample, or with respect to the superpopulation distribution of units if the analysis is more general. When $h = \expit$ then 
$h' = h (1-h)$ and the equation simplifies to 
$\Ex{e(X_i) X_i} = \Ex{h(X_i^\top\beta) X_i}$.
\end{proof}

\section{Result for $E(\expit(a + \sigma Z))$ of Section~\ref{sec:inner}}
\label{appendix:approx}
Here we analyze the function 
$$r(a, \sigma) = E(\expit(a + \sigma Z)) = E(\frac{e^{a + \sigma Z}}{1 + e^{a + \sigma Z}}),
$$
where $Z\sim\mathcal{N}(0, 1)$ is standard normal. 
We show that $\partial r/\partial \sigma$ has the opposite sign of $a$; 
function $r$ is monotone increasing with respect to $\sigma$ when $a<0$ and it is monotone 
decreasing with respect to $\sigma$ when $a > 0$. This result is sufficient for the theory to work.
First, we take the partial derivative:
\begin{align}
\label{app:eq1}
\frac{\partial r}{\partial \sigma} 
= \int_{-\infty}^{+\infty} \frac{e^{a + \sigma Z}}{(1 + e^{a + \sigma Z})^2}
   z \phi(z) dz,
\end{align}
where $\phi$ is the density of the standard normal. 
Suppose that $a > 0$. We claim that for every $u\ge 0$:
\begin{align}
\label{app:eq2}
g(a, -u) = \frac{e^{a - u}}{(1 + e^{a - u})^2} > 
  \frac{e^{a + u}}{(1 + e^{a + u})^2} = g(a, u).
  \end{align}
To prove this, direct algebraic manipulation of Eq.~\eqref{app:eq2} yields
$
1 + e^{a + u} > e^a + e^u,
$
which indeed holds for every $a>0$ and $u\ge 0$. From Eq.~\eqref{app:eq1} we obtain
$$
\frac{\partial r}{\partial \sigma} 
= \int_{0}^{+\infty} [g(a, \sigma z) - g(a, -\sigma z)] z \phi(z) dz
< 0,
$$
where the inequality follows from Eq.~\eqref{app:eq2}. The case where $a < 0$ is symmetric.
\end{document}